\newtheorem{theorem}{Theorem}[section]
\newtheorem{proposition}{Proposition}[section]
\newtheorem{lemma}{Lemma}[section]
\newtheorem{corollary}{Corollary}[section]
\newcommand{\Tr}{\operatorname{Tr}}
\def\diag{\operatorname{diag}}
\theoremstyle{definition}
\theoremstyle{remark}
\newtheorem{remark}{Remark}
\newcommand{\R}{{\mathbf{R}}}
\newcommand{\N}{{\mathbf{N}}}
\begin{document}

\title[On generalized Powers-St$\o$rmer's Inequality]{On generalized Powers-St$\o$rmer's Inequality}

\author[D. T. Hoa]{Dinh Trung Hoa}
\address{Research Center for Sciences and Technology, Duy Tan University, 182 Nguyen Van Linh, Danang, Vietnam}
\email{dinhtrunghoa@duytan.edu.vn}

\author[H. Osaka]{HIROYUKI OSAKA$^a$} 
\date{29, Mar., 2012}
\thanks{}
\address{Department of Mathematical Sciences, Ritsumeikan
University, Kusatsu, Shiga 525-8577, Japan}
\email{osaka@se.ritsumei.ac.jp}

\author[H. M. Toan]{Ho Minh Toan}
\address{Mathematical Institute, 18 Hoang Quoc Viet, Hanoi, Vietnam}
\email{hmtoan@math.ac.vn}


\keywords{Powers-St$\o$rmer's inequality, trace, positive
functional, $C^*$-algebras} 
\subjclass[2000]{46L30, 15A45}

\footnote{$^A$Research partially supported by the JSPS grant for 
Scientic Research No. 20540220.}

\begin{abstract}
A generalization of Powers-St$\o$rmer's inequality for operator
monotone functions on $[0, +\infty)$ and for positive
linear functional on general $C^*$-algebras will be proved. It
also will be shown that the generalized Powers-St$\o$rmer
inequality characterizes the
tracial functionals on $C^*$-algebras.
\end{abstract}

\maketitle
\section{Introduction}
Powers-St$\o$rmer's inequality (see, for example,
\cite{Power-Stormer}) asserts that for $s \in [0,1]$ the following
inequality
\begin{equation}
2 \Tr (A^sB^{1-s}) \ge \Tr(A+B-|A-B|)
\end{equation}
holds for any pair of positive matrices $A, B$. This is a key
inequality to prove the upper bound of Chernoff bound, in quantum
hypothesis testing theory \cite{K. M. R. Audenaert}. This
inequality was first proven in \cite{K. M. R. Audenaert}, using an
integral representation of the function $t^s$. After that, M.~Ozawa
gave a much simpler proof for the same inequality, using fact that
for $s \in [0,1]$ function $f(t)=t^s\ (t\in [0,+\infty))$ is an
operator monotone. Recently, Y.~Ogata in \cite{Ogata Y} extended
this inequality to standard von Neumann algebras. The motivation
of this paper is that if the function $f(t)=t^s$ is replaced by
another operator monotone function (this class is intensively
studied, see \cite{HJT}\cite{OST}), 
then $\Tr(A+B-|A-B|)$ may get smaller upper bound that is
used in quantum hypothesis testing. Based on M.~Ozawa's proof we
formulate Powers-St$\o$rmer's inequality for an arbitrary operator
monotone function on $[0,+\infty)$ in the context of general
$C^*$-algebras.

Finally, we will show that the Powers-St$\o$rmer's inequality
characterizes the trace property for a normal linear positive functional on a von
Neumann algebras and for a  linear positive functional on a
$C^*$-algebra.

Recall that a positive linear functional $\varphi$ on a
von Neumann algebra $\mathcal{M}$ is said to be \emph{normal} if
$\varphi(\sup A_i) = \sup \varphi(A_i)$ for every bounded
increasing net $A_i$ of positive elements in $\mathcal{M}$. 
A linear functional $\varphi$ on a $C^*$-algebra $\mathcal{A}$ is
said to be \emph{tracial} if $\varphi(AB) = \varphi(BA)$ for all $A, B
\in \mathcal{A}$.

For all other notions used in the paper, we refer the reader to
the monograph \cite{Kad-Ring}.

This article has been completed when the first author visited Ritsumeikan University 
in Februrary, 2012. He is very grateful to all staffs in the department of Mathematical Sciences 
for their warm hospitality during his stay there. The second author would like to thank Professor Jun Tomiyama 
for his stimulating discussion on matrix monotone functions through e-mail.

\section{Main results}

Let $n \in \N$ and $M_n$ be the algebra of $n \times n$ matrices.
Let $I$ be an interval in $\R$ and $f \colon I \rightarrow \R$ be a continuous function. 
We call a function $f$ matrix monotone of order $n$ or 
$n$-monotone in short whenever the inequality 
$$
A \leq B \Longrightarrow f(A) \leq f(B)
 $$
for an arbitrary selfadjoint matrices $A, B \in M_n$ such that 
$A \leq B$ and  all eigenvalues of $A$ and $B$ are contained in $I$.

Let $H$ be a separable infinite dimensional Hilbert space and $B(H)$ 
be the set of all bounded linear operators on $H$. 
We call a function $f$ operator monotone whenever 
the inequality 
$$
A \leq B \Longrightarrow f(A) \leq f(B)
 $$
for an arbitrary selfadjoint matrices $A, B \in B(H)$ such that 
$A \leq B$ and  all eigenvalues of $A$ and $B$ are contained in $I$.

\vskip 3mm

We denote the spaces of operator monotone functions by $P_\infty(I)$.  
The spaces for $n$-monotone functions are written as $P_n(I)$. 
We have then
\begin{align*}
&P_1(I) \supseteq \cdots \supseteq P_{n-1}(I) \supseteq P_n(I) \supseteq P_{n+1}(I) \supseteq \cdots 
\supseteq P_\infty(I). \\
\end{align*}
Here we note that $\cap_{n=1}^\infty P_n(I) = P_\infty(I)$ and each inclusion is proper \cite{HJT}\cite{OST}.

\vskip 3mm

The following result is well-known. For example see the proof in \cite[Theorem~2.5]{Hansen-Perd-annal}.

\vskip 3mm

\begin{lemma}\label{lem:MonotoneConcave}
Let $f$ be a strictly positive, continuous function on $[0, \infty)$. 
If the function $f$ is $2n$-monotone, then for any positive semidefinite $A$ and 
a contraction $C$ in $M_n$ we have
$$
C^*f(A)C \leq f(C^*AC).
$$
\end{lemma}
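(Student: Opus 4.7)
The plan is to dilate the problem into $M_{2n}$ and reduce the stated compression inequality to a Jensen-type inequality for an isometry, which is exactly what $2n$-monotonicity delivers via a Hansen-Pedersen-type characterization.

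Since $C$ is a contraction, the defect operator $D := (I_n - C^*C)^{1/2}$ is well defined and positive in $M_n$, and the block column $V := \bigl(\begin{smallmatrix} C \\ D \end{smallmatrix}\bigr) \in M_{2n, n}$ is an isometry because $V^*V = C^*C + D^2 = I_n$. Embed $A$ into $M_{2n}$ as $\tilde A := \diag(A, 0)$. Since $\tilde A$ is block diagonal, $f(\tilde A) = \diag(f(A), f(0) I_n)$, and direct block multiplication yields
\[
V^* \tilde A\, V = C^* A C, \qquad V^* f(\tilde A)\, V = C^* f(A) C + f(0)\, D^2.
\]
Strict positivity of $f$ forces $f(0) > 0$, so the second identity already dominates $C^* f(A) C$. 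It therefore suffices to prove the isometric Jensen inequality
\[
f(V^* \tilde A\, V) \;\geq\; V^* f(\tilde A)\, V.
\]

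Extending $V$ to a unitary $U = [V \mid V'] \in M_{2n}$ and setting $S := U^* \tilde A\, U \in M_{2n}^+$, the upper-left $n \times n$ blocks satisfy $V^* \tilde A\, V = S_{11}$ and $V^* f(\tilde A)\, V = (f(S))_{11}$, so the task becomes the pinching inequality $f(S_{11}) \geq (f(S))_{11}$ for every $S \in M_{2n}^+$. With the self-adjoint unitary $W := \diag(I_n, -I_n) \in M_{2n}$, the pinched matrix $Y := \tfrac12(S + W S W) = \diag(S_{11}, S_{22})$ is block diagonal, so $f(Y) = \diag(f(S_{11}), f(S_{22}))$ and the required inequality reads $f(Y)_{11} \geq (f(S))_{11}$.

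The principal step, and the one which alone uses $2n$-monotonicity, is this final pinching inequality. I would invoke, or reproduce the argument of, \cite[Theorem~2.5]{Hansen-Perd-annal}: for a continuous non-negative function $f$ on $[0, \infty)$, $2n$-monotonicity is equivalent to the Jensen inequality for isometries $V \colon \C^n \to \C^{2n}$ and equivalently to $n$-concavity of $f$ on $M_n$. The main obstacle is exactly this equivalence: the factor of two between $n$ and $2n$ is precisely the dimensional slack needed to bridge $M_{2n}$-monotonicity with an $M_n$-concavity or compression statement, and establishing it typically requires a further block-matrix dilation or an integral representation adapted to $2n$-monotone functions. Once this ingredient is in hand, the dilation in the previous paragraphs closes out the proof.
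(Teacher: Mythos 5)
Your proposal is correct as far as it goes and lands in the same place as the paper: the paper offers no proof of this lemma at all, only the pointer to \cite[Theorem~2.5]{Hansen-Perd-annal}, and your dilation argument (contraction $\to$ isometry $\to$ pinching, via the defect operator $(I-C^*C)^{1/2}$ and the unitary $\diag(I,-I)$) correctly reduces the statement to an equivalent form but then defers the one step that actually uses $2n$-monotonicity to that same theorem. Since that deferred implication is essentially the whole content of the lemma, your write-up is a correct and standard preamble to the Hansen--Pedersen argument rather than an independent proof, which matches the paper's own treatment (one small caution: the clean statement is the one-directional implication ``$2n$-monotone and $f(0)\geq 0$ implies the level-$n$ Jensen inequality and $n$-concavity''; the reverse implications lose further factors of two, so the ``equivalence'' you assert should not be taken at fixed $n$).
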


\vskip 3mm

\vskip 3mm

\begin{lemma}\label{lem:Monotone1}
Let $f$ be a continuous function on $(0, \infty)$ such that $0 \notin f((0, \infty))$.
Then, $f$ is $n$-monotone if and only if the function $- \frac{1}{f(t)}$ is $n$-monotone.
\end{lemma}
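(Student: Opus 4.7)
The plan is to reduce the statement to the classical fact that the map $X \mapsto X^{-1}$ reverses order on positive definite matrices, which is in particular $n$-monotone in the appropriate sense for every $n$. Since the condition on $-1/f$ is symmetric — namely $-1/(-1/f) = f$ — it suffices to prove one direction.

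First I would observe that because $f$ is continuous on the connected set $(0,\infty)$ and $0 \notin f((0,\infty))$, the image is a connected subset of $\R \setminus \{0\}$, so $f$ is either strictly positive or strictly negative on $(0,\infty)$. In particular, whenever $A \in M_n$ is selfadjoint with $\sigma(A) \subset (0,\infty)$, $\sigma(f(A)) = f(\sigma(A))$ lies entirely in $(0,\infty)$ or entirely in $(-\infty,0)$, so $f(A)$ is invertible and $-1/f(A) = -(f(A))^{-1}$ via the functional calculus.

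Next, assume $f$ is $n$-monotone. Take selfadjoint $A, B \in M_n$ with spectra in $(0,\infty)$ and $A \leq B$; we must show $-1/f(A) \leq -1/f(B)$. By $n$-monotonicity, $f(A) \leq f(B)$. If $f > 0$, both $f(A)$ and $f(B)$ are positive definite; the standard order-reversing property of inversion on positive definite matrices gives $f(B)^{-1} \leq f(A)^{-1}$, and multiplying by $-1$ yields the desired inequality. If $f < 0$, then $-f(A) \geq -f(B) > 0$, and the same order-reversing property applied to $-f(A)$ and $-f(B)$ gives $(-f(A))^{-1} \leq (-f(B))^{-1}$, i.e.\ $-1/f(A) \leq -1/f(B)$. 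Either way, $-1/f$ is $n$-monotone.

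For the converse, set $g = -1/f$. Since $f$ takes finite, nonzero values on $(0,\infty)$, so does $g$, i.e.\ $0 \notin g((0,\infty))$. Applying the argument just given to $g$ shows that $-1/g = f$ is $n$-monotone. There is no real obstacle here; the only thing one has to be careful about is the sign analysis in the case $f < 0$ and the identification $(-X)^{-1} = -X^{-1}$, both of which are immediate from the functional calculus.
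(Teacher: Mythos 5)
Your proof is correct, but it takes a genuinely different route from the paper. The paper invokes Loewner's characterization of $n$-monotonicity via positive semidefiniteness of the difference-quotient matrices $\bigl[\frac{f(t_i)-f(t_j)}{t_i-t_j}\bigr]$, writes the corresponding matrix for $-\frac{1}{f}$ as the Hadamard product of the Loewner matrix of $f$ with the positive semidefinite matrix $\bigl[\frac{1}{f(t_i)f(t_j)}\bigr]$, and concludes by the Schur product theorem. You instead argue directly from the definition: the sign analysis (continuity plus $0\notin f((0,\infty))$ forces $f$ to have constant sign), followed by the standard fact that inversion is order-reversing on positive definite matrices, applied to $f(A)\le f(B)$ or to $-f(B)\le -f(A)$ according to the sign. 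Both arguments are sound; yours is more elementary in that it needs only the anti-monotonicity of $X\mapsto X^{-1}$ and avoids Loewner's theorem entirely (which also sidesteps the mild regularity issues hidden in the difference-quotient criterion for merely continuous $f$), while the paper's Hadamard-product computation is the more mechanical calculation once the Loewner criterion is accepted. Your observation that the involution $f\mapsto -\frac{1}{f}$ makes the two directions symmetric, so only one needs to be proved, is a clean economy that the paper does not exploit (it writes out both directions separately).
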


\vskip 3mm

\begin{proof}
For any $t_1, t_2, \cdots, t_n \in (0, \infty)$ we have

\begin{align*}
\frac{\frac{1}{f(t_i)} - \frac{1}{f(t_j)}}{t_i - t_j}
&= \frac{\frac{f(t_j) - f(t_i)}{f(t_i)f(t_j)}}{t_i - t_j}\\
&= -  \frac{1}{f(t_i)f(t_j)}\frac{f(t_i) - f(t_j)}{t_i-t_j}.
\end{align*}

Since $f$ is $n$-monotone, the matrix $[\frac{f(t_i) - f(t_j)}{t_i-t_j}]$ is positive semidefinite 
by \cite{L}, hence, we have  
\begin{align*}
[\frac{(-\frac{1}{f(t_i)}) - (-\frac{1}{f(t_j)})}{t_i - t_j}]
&= - [\frac{\frac{1}{f(t_i)} - \frac{1}{f(t_j)}}{t_i - t_j}]\\
&= - (-[ \frac{1}{f(t_i)f(t_j)}\frac{f(t_i) - f(t_j)}{t_i-t_j}]) \\
&=  [\frac{1}{f(t_i)f(t_j)}] \circ [\frac{f(t_i) - f(t_j)}{t_i-t_j}] \\
&\geq 0,
\end{align*}
where $\circ$ means the Hadamard product. 

Therefore, the function $-\frac{1}{f(t)}$ is $n$-monotone by \cite{L}.

Conversely, if $-\frac{1}{f}$ is $n$-monotone, we have 
\begin{align*}
[\frac{f(t_i) - f(t_j)}{t_i-t_j}] &= [f(t_i)f(t_j)] \circ 
[\frac{(-\frac{1}{f(t_i)}) - (-\frac{1}{f(t_j)})}{t_i - t_j}]\\
&\geq 0,
\end{align*}
hence $f$ is $n$-monotone.
\end{proof}

\vskip 3mm

\begin{proposition}\label{prp:monotone}
Let $f$ be a strictly positive, continuous function on $[0, \infty)$. 
If $f$ is $2n$-monotone, the function $g(t) = \frac{t}{f(t)}$ is $n$-monotone 
on $[0, \infty)$.
\end{proposition}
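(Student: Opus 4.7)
The plan is to show directly that $0 \leq A \leq B$ in $M_n$ implies $A f(A)^{-1} \leq B f(B)^{-1}$; because $A$ commutes with $f(A)$, the symmetric form $A^{1/2} f(A)^{-1} A^{1/2}$ equals $A f(A)^{-1} = g(A)$, and similarly for $B$. The whole argument rests on Lemma \ref{lem:MonotoneConcave} (the Hansen-type inequality available because $f$ is $2n$-monotone), applied through a well-chosen contraction built from the square roots of $A$ and $B$.

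First I would reduce to the invertible case: replace $A, B$ by $A + \epsilon I \leq B + \epsilon I$, prove the inequality there, and let $\epsilon \to 0^+$, using continuity of $g$ on $[0,\infty)$ via the functional calculus. With $A, B$ now positive definite, set $D = B^{-1/2} A^{1/2}$. From $A \leq B$ we have $B^{-1} \leq A^{-1}$, hence $D^* D = A^{1/2} B^{-1} A^{1/2} \leq I$, so $D$ is a contraction, invertible since $A$ and $B$ are. A direct computation also gives $D^* B D = A$.

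Now Lemma \ref{lem:MonotoneConcave}, applied to the $2n$-monotone $f$, the positive operator $B$, and the contraction $D$, yields
\begin{equation*}
D^* f(B) D \leq f(D^* B D) = f(A).
\end{equation*}
Both sides are positive definite ($f > 0$ and $D$ invertible), so inversion reverses the inequality:
\begin{equation*}
f(A)^{-1} \leq D^{-1} f(B)^{-1} (D^*)^{-1}.
\end{equation*}
Conjugating both sides by $A^{1/2}$, the left side becomes $A^{1/2} f(A)^{-1} A^{1/2} = g(A)$. On the right, $D^{-1} = A^{-1/2} B^{1/2}$ gives $A^{1/2} D^{-1} = B^{1/2}$ and symmetrically $(D^*)^{-1} A^{1/2} = B^{1/2}$, so the right side collapses to $B^{1/2} f(B)^{-1} B^{1/2} = g(B)$. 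This gives $g(A) \leq g(B)$.

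The key insight---and the only nontrivial step---is the choice $D = B^{-1/2} A^{1/2}$: it is precisely the contraction that simultaneously expresses $A$ as $D^* B D$ (so Lemma \ref{lem:MonotoneConcave} converts an inequality involving $B$ into one involving $A$) and satisfies $A^{1/2} D^{-1} = B^{1/2}$, so that the conjugation by $A^{1/2}$ after inversion collapses cleanly to $B^{1/2} f(B)^{-1} B^{1/2}$. Apart from this algebraic identification, the argument is a short chain of monotonicities together with the routine $\epsilon$-perturbation for the noninvertible case.
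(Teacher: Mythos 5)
Your proof is correct and follows essentially the same route as the paper's: the same contraction $C = B^{-1/2}A^{1/2}$ and the same Jensen-type inequality from Lemma~\ref{lem:MonotoneConcave} do all the work. The only difference is cosmetic: where the paper conjugates by $A^{-1/2}$ to conclude that $-f(t)/t$ is $n$-monotone and then invokes Lemma~\ref{lem:Monotone1} to pass to $t/f(t)$, you invert the operator inequality directly and conjugate by $A^{1/2}$, bypassing Lemma~\ref{lem:Monotone1}; your explicit $\varepsilon$-perturbation to handle noninvertible $A,B$ is a detail the paper leaves implicit.
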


\vskip 3mm

\begin{proof}
Let $A, B$ be positive matrixces in $M_n$ such that $0 < A \leq B$.

Let $C = B^{-\frac{1}{2}}A^{\frac{1}{2}}$. Then $\|C\| \leq 1$.
Since $f$ is $2n$-monotone, $-f$ satisfies the Jensen type inequality  from Lemma~\ref{lem:MonotoneConcave},  
that is, 
\begin{align*}
-f(A) = -f(C^*BC) &\leq -C^*f(B)C\\
-f(A) &\leq -A^{\frac{1}{2}}B^{-\frac{1}{2}}f(B)B^{-\frac{1}{2}}A^{\frac{1}{2}}\\
-A^{-\frac{1}{2}}f(A)A^{-\frac{1}{2}} &\leq - B^{-\frac{1}{2}}f(B)B^{-\frac{1}{2}}\\
-A^{-1}f(A) &\leq -B^{-1}f(B)
\end{align*}
Hence, the function $-\frac{f(t)}{t}$ is $n$-monotone.
Therefore, from Lemm~\ref{lem:Monotone1} we conclude that 
$$
-\frac{1}{-\frac{f(t)}{t}} = \frac{t}{f(t)}
$$
is $n$-monotone.
\end{proof}

\vskip 3mm

\begin{remark}
The condition of $2n$-monotonicity of $f$ is needed to guarantee the $n$-monotonicity of  
$g$. Indeed, 
it is well-known that $t^3$ is monotone, but not $2$-monotone. 
In this case the function $g(t) = \frac{t}{t^3} = \frac{1}{t^2}$ 
is obviously not $1$-monotone.
\end{remark}

\vskip 3mm

\begin{proposition}\label{prp:monotone2}
Let $h \colon [0, \infty) \rightarrow [0, \infty)$ be a Borel function such that 
$h$ is a continuous, $n$-monotone on $(0, \infty)$, and $h(0) = 0$.
Then for any $A, B \in M_n^+$ with $A \leq B$ we have 
$$
h(A) \leq h(B).
$$
\end{proposition}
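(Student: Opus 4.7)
The natural strategy is approximation: regularize $A$ and $B$ by adding $\epsilon I$ with $\epsilon>0$ so that the spectra lie in $(0,\infty)$ where the hypothesis applies, and then let $\epsilon\to 0^{+}$. For each $\epsilon>0$, we have $A+\epsilon I\le B+\epsilon I$ and both operators have spectra contained in $[\epsilon,\infty)\subset(0,\infty)$, so $n$-monotonicity of $h$ on $(0,\infty)$ immediately yields
\[
h(A+\epsilon I)\le h(B+\epsilon I).
\]

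The task is then to pass to the limit $\epsilon\to 0^{+}$. Writing the spectral decomposition $A=\sum_i\lambda_i P_i$, continuity of $h$ on $(0,\infty)$ handles the nonzero eigenvalues, while for a possible zero eigenvalue one has $h(\epsilon)\to L:=\lim_{t\to 0^{+}}h(t)\ge 0$, the limit existing because $n$-monotonicity forces ordinary monotonicity on $(0,\infty)$. Denoting by $P_{\ker A}$ the spectral projection onto $\ker A$ and using $h(0)=0$, we obtain
\[
h(A+\epsilon I)\longrightarrow h(A)+L\,P_{\ker A},
\]
and an analogous formula for $B$. Passing to the limit in the inequality above gives
\[
h(A)+L\,P_{\ker A}\le h(B)+L\,P_{\ker B}.
\]

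The main obstacle is precisely that $h$ need not be continuous at $0$, so $h(A+\epsilon I)$ and $h(B+\epsilon I)$ may fail to converge to $h(A)$ and $h(B)$ individually when $L>0$. This is resolved by the elementary observation that $A\le B$ forces $\ker B\subseteq\ker A$ (if $Bx=0$ then $0\le\langle Ax,x\rangle\le\langle Bx,x\rangle=0$, so $Ax=0$), and hence $P_{\ker B}\le P_{\ker A}$. Therefore $L(P_{\ker B}-P_{\ker A})\le 0$, and rearranging the displayed inequality yields $h(A)\le h(B)$. If $h$ turns out to be continuous at $0$ (equivalently $L=0$), the argument collapses to the immediate limit.
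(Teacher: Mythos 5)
Your proof is correct and follows essentially the same route as the paper: $\varepsilon$-regularization to push the spectra into $(0,\infty)$, combined with the key observation that $A\le B$ forces $\ker B\subseteq\ker A$ (the paper phrases this as $p\le q$ for the support projections). The only cosmetic difference is that the paper compresses the $\varepsilon$-inequality by the support projection of $B$ before letting $\varepsilon\to 0$, thereby avoiding any appeal to the existence of $L=\lim_{t\to 0^{+}}h(t)$, whereas you take the limit first (justifying the existence of $L$ via monotonicity) and then discard the kernel terms; both steps are valid.
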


\begin{proof}
Let $B = \sum_s \mu_s q_s$ be a spectral decomposition. 
Set $1 - q$ as a projection on $\mathrm{Ker}(B)$. Then $B = Bq = qB = \sum_{s'}\mu_{s'}q_{s'}$ and 
$q = \sum_{s'} q_{s'}$. 

Similarly, let $A = \sum_t \lambda_tp_t$ be a spectral projection and $(1 - p)$ be a 
projection on $\mathrm{Ker}(A)$. Since $A \leq B$, $p \leq q$ and 
$A = \sum_{t'}\lambda_{t'}p_{t'}$ and $p = \sum_{t'}p_{t'}$. 
Note that  since $h(0) = 0$, by the function calculus we have 
$h(A) = \sum_{t'}h(\lambda_{t'})p_{t'}$ and $h(B) = \sum_{s'}h(\mu_{s'})q_{s'}$.

For any $\varepsilon > 0$ since
\begin{align*}
0 &< \sum_{t'}\lambda_{t'}p_{t'} + \varepsilon 1\\
&\leq \sum_{s'}\mu_{s'}q_{s'} + \varepsilon 1
\end{align*}
and $h$ is $n$-monotone on $(0, \infty)$, 
we have
\begin{align*}
h(\sum_{t'}(\lambda_{t'} + \varepsilon)p_{t'} + \varepsilon (1 - p)) \leq
h(\sum_{s'}(\mu_{s'} + \varepsilon) q_{s'} + \varepsilon (1 - q)).
\end{align*}

Since 
\begin{align*}
 \sum_{t'}h(\lambda_{t'}+ \varepsilon)p_{t'} + h(\varepsilon) (1 - p)
&= h(\sum_{t'}(\lambda_{t'}  + \varepsilon) p_{t'} + \varepsilon (1 - p)) \\
& \leq h(\sum_{s'}(\mu_{s'} + \varepsilon) q_{s'} + \varepsilon (1 - q)) \\
&= \sum_{s'}h(\mu_{s'} + \varepsilon)q_{s'} + h(\varepsilon) (1 - q)\\
\end{align*}
and $p \leq q$, 
it follows that 
\begin{align*}
\sum_{t'}h(\lambda_{t'}+ \varepsilon)p_{t'} 
&\leq \sum_{t'}h(\lambda_{t'}+ \varepsilon)p_{t'} + h(\varepsilon)q(1 - p)q\\
&\leq \sum_{s'}h(\mu_{s'} + \varepsilon)q_{s'}.
\end{align*}
Therefore, since $h$ is continuus on $(0, \infty)$, as $\varepsilon \rightarrow 0$ we have 
\begin{align*}
h(A) &= \sum_{t'}h(\lambda_{t'})p_{t'} \\
&\leq \sum_{s'}h(\mu_{s'})q_{s'}\\
&= h(B).
\end{align*}
\end{proof}

\vskip 3mm

\begin{corollary}\label{coro:Borel function calculus}
Let $f$ be a $2n$-monotone, continuous  
function on $[0, \infty)$ such that $f((0, \infty)) \subset (0, \infty)$, 
and let $g$ be a Borel function on $[0, \infty)$ defined by 
$g(t) 
= 
\left\{\begin{array}{cc}
\frac{t}{f(t)}&(t \in (0, \infty))\\
0& (t = 0)
\end{array}
\right.
$.
Then for any pair of positive matrices $A, B \in M_n$ with $A \leq B$, 
$g(A) \leq g(B)$.
\end{corollary}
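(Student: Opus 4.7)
The plan is to combine Proposition~\ref{prp:monotone} with Proposition~\ref{prp:monotone2}: the former should deliver the $n$-monotonicity of $g$ on $(0,\infty)$, and the latter then promotes this to the required matrix inequality on all of $M_n^+$. The main obstacle is that Proposition~\ref{prp:monotone} requires $f$ to be strictly positive on the entire closed half-line $[0,\infty)$, whereas the corollary only assumes $f((0,\infty))\subset(0,\infty)$ and thus allows $f(0)=0$. I would overcome this gap with a standard $\varepsilon$-shift of $f$.

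Concretely, for each $\varepsilon>0$ I would set $f_\varepsilon(t):=f(t)+\varepsilon$. Then $f_\varepsilon$ is continuous and strictly positive on $[0,\infty)$, and it is still $2n$-monotone since adding a scalar constant preserves matrix monotonicity of every order. Proposition~\ref{prp:monotone} applied to $f_\varepsilon$ therefore shows that
$$
g_\varepsilon(t) := \frac{t}{f(t)+\varepsilon}
$$
is $n$-monotone on $[0,\infty)$. For any fixed $t_1,\dots,t_n\in(0,\infty)$, the L\"owner kernel criterion \cite{L} (already used in the proof of Lemma~\ref{lem:Monotone1}) says that the divided-difference matrix $\bigl[(g_\varepsilon(t_i)-g_\varepsilon(t_j))/(t_i-t_j)\bigr]$ is positive semidefinite for every $\varepsilon>0$. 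Letting $\varepsilon\to 0^+$, these matrices converge entrywise to $\bigl[(g(t_i)-g(t_j))/(t_i-t_j)\bigr]$, which is therefore also positive semidefinite. Hence $g$ is $n$-monotone on $(0,\infty)$.

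Finally, I would verify the remaining hypotheses of Proposition~\ref{prp:monotone2}: $g$ is a Borel map $[0,\infty)\to[0,\infty)$ (nonnegativity follows from $t\ge 0$ and $f(t)>0$ on $(0,\infty)$), it is continuous on $(0,\infty)$, it is $n$-monotone on $(0,\infty)$ by the previous step, and $g(0)=0$ by construction. Proposition~\ref{prp:monotone2} then yields $g(A)\le g(B)$ for every pair $A,B\in M_n^+$ with $A\le B$, which completes the proof.
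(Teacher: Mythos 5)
Your proof is correct and follows the same two-step route as the paper: Proposition~\ref{prp:monotone} supplies the $n$-monotonicity of $g$ on $(0,\infty)$, and Proposition~\ref{prp:monotone2} upgrades it to the operator inequality on all of $M_n^+$. The one place where you genuinely add something is the $\varepsilon$-shift. The paper's proof invokes Proposition~\ref{prp:monotone} directly, even though that proposition is stated for $f$ strictly positive on the closed half-line $[0,\infty)$, whereas the corollary only assumes $f((0,\infty))\subset(0,\infty)$ and therefore permits $f(0)=0$ --- a case that actually occurs in the paper's main application $f(t)=t^s$. Your regularization $f_\varepsilon=f+\varepsilon$ (which preserves $2n$-monotonicity, since adding a scalar commutes with the functional calculus, and restores strict positivity at $0$), followed by passage to the limit in the L\"owner divided-difference matrices exactly as in Lemma~\ref{lem:Monotone1}, closes this gap cleanly. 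As a minor alternative, you could avoid the kernel criterion altogether by observing that $g_\varepsilon\to g$ uniformly on compact subsets of $(0,\infty)$, so the inequality $g_\varepsilon(A)\le g_\varepsilon(B)$ for $0<A\le B$ passes to the limit in norm. Either way, your write-up is a more careful version of the paper's argument rather than a different one.
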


\vskip 3mm

\begin{proof}
Since $f$ is $2n$-monotone, continuous function on $[0, \infty)$ such that $f((0, \infty)) \subset (0, \infty)$, 
from Proposition~\ref{prp:monotone} $g$ is $n$-monotone on $(0, \infty)$. 

Hence, since $g$ is a Borel function on $[0, \infty)$ with $g(0) = 0$, from Proposition~\ref{prp:monotone2} it follows that 
$g(A) \leq (B)$.
\end{proof}

\vskip 3mm

\begin{theorem}\label{thm:Powers-Stormer}
Let $\Tr$ be a canonical trace on $M_n$ and $f$ be a $2n$-monotone 
function on $[0, \infty)$ such that $f((0, \infty)) \subset (0, \infty)$. 
Then for any pair of positive matrices $A, B \in M_n$  

\begin{equation}\label{matrix inequality}
\Tr(A) + \Tr(B) - \Tr(|A - B|) 
\leq 2\Tr(f(A)^{\frac{1}{2}}g(B)f(A)^{\frac{1}{2}}),
\end{equation}
where 
$g(t) 
= 
\left\{\begin{array}{cc}
\frac{t}{f(t)}&(t \in (0, \infty))\\
0& (t = 0)
\end{array}
\right.
$.
\end{theorem}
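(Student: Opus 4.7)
The plan is to combine the Jordan decomposition of $A - B$ with the operator-monotonicity of $g(t) = t/f(t)$ (Corollary~\ref{coro:Borel function calculus}) and the Jensen-type inequality of Lemma~\ref{lem:MonotoneConcave}. First I would decompose $A - B = X - Y$ in Jordan form, so that $X = (A-B)_+$, $Y = (A-B)_-$ are positive with $XY = YX = 0$. Then $|A-B| = X + Y$ and
$$\Tr(A) + \Tr(B) - \Tr(|A-B|) = 2\Tr(A - X) = 2\Tr(B - Y).$$
Using the cyclicity identity $\Tr(f(A)^{1/2} g(B) f(A)^{1/2}) = \Tr(f(A) g(B))$, the theorem reduces to $\Tr(A - X) \leq \Tr(f(A) g(B))$.

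Next, exploit the identity $f(t)g(t) = t$ (valid for $t > 0$, extended by $g(0) = 0$), which yields $A = f(A) g(A)$ and in particular $\Tr(A) = \Tr(f(A) g(A))$. Setting $P$ and $Q = I - P$ to be the spectral projections of $A - B$ onto the nonnegative and negative parts, a short computation with $|A-B| = (P-Q)(A-B)$ gives $\Tr(A - X) = \Tr(AQ) + \Tr(BP)$, so the objective becomes $\Tr(AQ) + \Tr(BP) \leq \Tr(f(A) g(B))$; by construction $PAP \geq PBP$ and $QAQ \leq QBQ$.

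One can then use the decomposition $f(A) = f(A)^{1/2}(P + Q) f(A)^{1/2} = M_P + M_Q$ with $M_P = f(A)^{1/2} P f(A)^{1/2}$, $M_Q = f(A)^{1/2} Q f(A)^{1/2}$ both positive, to write $\Tr(f(A) g(B)) = \Tr(g(B) M_P) + \Tr(g(B) M_Q)$. Combining the operator-monotonicity of $g$ from Corollary~\ref{coro:Borel function calculus} (applied to the block comparisons $PBP \leq PAP$ and $QAQ \leq QBQ$) with the Jensen-type inequality from Lemma~\ref{lem:MonotoneConcave} ($P f(A) P \leq f(PAP)$ and the analogue for $Q$), together with $fg = t$ to reconstitute $A$ and $B$ from their functional calculi, should yield the inequality.

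The main obstacle is that a naive block-by-block bound fails: $\sum_\alpha \Tr(f(\alpha A \alpha) g(\alpha B \alpha))$ is generally larger than $\Tr(f(A) g(B))$, and individual inequalities such as $\Tr(AQ) \leq \Tr(g(B) M_Q)$ and $\Tr(BP) \leq \Tr(g(B) M_P)$ need not hold separately, even though their sum does. The argument must therefore exploit a cross-compensation between the $P$- and $Q$-contributions, most likely by applying Lemma~\ref{lem:MonotoneConcave} symmetrically to both projections and invoking the full strength of the $2n$-monotonicity of $f$ (rather than mere $n$-monotonicity) to control the off-block error terms.
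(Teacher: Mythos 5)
Your setup is sound as far as it goes: the Jordan decomposition $A-B=X-Y$, the reduction of the claim to $\Tr(A-X)\le\Tr\bigl(f(A)^{1/2}g(B)f(A)^{1/2}\bigr)$, and the identification of the two key tools (the $n$-monotonicity of $g$ from Corollary~\ref{coro:Borel function calculus} and the identity $f(t)g(t)=t$) all match the actual argument. But from that point on the proposal has a genuine gap: you reformulate the target as $\Tr(AQ)+\Tr(BP)\le\Tr(f(A)g(B))$, propose to attack it by splitting along the spectral projections $P,Q$ of $A-B$, and then concede that the naive block-by-block bound fails and that some unspecified ``cross-compensation'' between the $P$- and $Q$-contributions is needed. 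That cross-compensation \emph{is} the theorem; nothing in the proposal supplies it, and the suggested ingredients ($Pf(A)P\le f(PAP)$ via Lemma~\ref{lem:MonotoneConcave}, the comparisons $PBP\le PAP$ and $QAQ\le QBQ$) do not obviously assemble into the inequality. So the proof is not complete.

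The idea you are missing is to introduce the single majorant $T:=B+X=A+Y$, which dominates both $A$ and $B$, and to telescope through it rather than decompose into blocks. Concretely: since $A\le T$, Corollary~\ref{coro:Borel function calculus} gives $g(A)\le g(T)$, so
\begin{align*}
\Tr(A)-\Tr\bigl(f(A)^{\frac12}g(B)f(A)^{\frac12}\bigr)
&=\Tr\bigl(f(A)^{\frac12}(g(A)-g(B))f(A)^{\frac12}\bigr)\\
&\le\Tr\bigl(f(A)^{\frac12}(g(T)-g(B))f(A)^{\frac12}\bigr)\\
&\le\Tr\bigl(f(T)^{\frac12}(g(T)-g(B))f(T)^{\frac12}\bigr),
\end{align*}
where the last step uses $0\le f(A)\le f(T)$ (monotonicity of $f$) against the positive operator $g(T)-g(B)$. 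Now $\Tr\bigl(f(T)g(T)\bigr)=\Tr(T)=\Tr(B)+\Tr(X)$, while $\Tr\bigl(f(T)^{\frac12}g(B)f(T)^{\frac12}\bigr)\ge\Tr\bigl(f(B)^{\frac12}g(B)f(B)^{\frac12}\bigr)=\Tr(B)$ by $f(B)\le f(T)$, and everything collapses to $\Tr(X)$, which is exactly the required bound. Note that Lemma~\ref{lem:MonotoneConcave} plays no direct role in this argument (it is only used upstream to prove that $g$ is $n$-monotone), and no spectral block analysis of $A-B$ is needed beyond the identity $A+Y=B+X$.
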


\vskip 3mm



\vskip 3mm

\begin{proof}
Let $A, B$ be any positive matrices in $M_n$.

For operator $(A - B)$ let us denote by $P = (A - B)^+$ and $Q =(A - B)^-$ 
its positive and negative part, respectively. Then we have
\begin{equation}\label{condition 1}
A - B = P - Q \quad \text{and} \quad |A - B| = P + Q,
\end{equation}
from that it follows that
\begin{equation}\label{condition 2}
A + Q = B + P.
\end{equation}

On account of (\ref{condition 2}) the inequality (\ref{matrix inequality}) is equivalent to the following
\begin{equation*}
\Tr(A) - \Tr (f(A)^{\frac{1}{2}} g(B) f(A)^{\frac{1}{2}}) \leq \Tr(P).
\end{equation*}

Since $B + P \ge B \ge 0$ and $B + P = A + Q \ge A \ge 0$,  we have 
$g(A) \leq g(B + P)$ by Corollary~\ref{coro:Borel function calculus} and 

\begin{align*}
\Tr(A)& - \Tr(f(A)^{\frac{1}{2}}g(B) f(A)^{\frac{1}{2}})\\
&= \Tr(f(A)^{\frac{1}{2}}g(A) f(A)^{\frac{1}{2}}) -  \Tr(f(A)^{\frac{1}{2}}g(B) f(A)^{\frac{1}{2}})\\
& \le \Tr(f(A)^{\frac{1}{2}}g(B + P) f(A)^{\frac{1}{2}}) - \Tr(f(A)^{\frac{1}{2}} g(B) f(A)^{\frac{1}{2}})\\
& = \Tr(f(A)^{\frac{1}{2}} (g(B + P) - g(B)) f(A)^{\frac{1}{2}}) \\
& \le \Tr(f(B + P)^{\frac{1}{2}} (g(B + P) - g(B)) f(B + P)^{\frac{1}{2}}) \\
& = \Tr(f(B + P)^{\frac{1}{2}} g(B + P)f(B + P)^{\frac{1}{2}}) \\
& - \Tr(f(B + P)^{\frac{1}{2}} g(B)f(B + P)^{\frac{1}{2}}) \\
& \leq \Tr(B + P) - \Tr(f(B)^{\frac{1}{2}} g(B)f(B)^{\frac{1}{2}}) \\
& = \Tr(B + P) - \Tr(B)\\
& = \Tr(P).
\end{align*}

Hence, we have the conclusion.

\end{proof}

\vskip 3mm

\begin{remark}
\begin{enumerate}
\item[(i)]
When given positive matrices $A, B$ in $M_n$ satisfies the condition $A \leq B$,
the inequality (\ref{matrix inequality}) becomes
\begin{align*}
\Tr(A) \leq \Tr(f(A)^{\frac{1}{2}}g(B)f(A)^{\frac{1}{2}}).
\end{align*}
\item[(ii)]
As pointed in Proposition~\ref{prp:monotone}, $2$-monotonicity of $f$ is needed to guarantee the 
inequality (\ref{matrix inequality}). Indeed, let $f(t) = t^3$ and $n = 1$.
Then, for any
$a, b \in (0, \infty)$,  the inequality (\ref{matrix inequality}) would imply 
\begin{align*}
a \leq f(a)^{\frac{1}{2}}g(b)f(a)^{\frac{1}{2}},
\end{align*}
that is,
\begin{align*}
\frac{a}{f(a)} \leq \frac{b}{f(b)}. 
\end{align*}
Since $\frac{t}{f(t)}$ is, however, not $1$-monotone, the latter inequality is impossible.
\end{enumerate}
\end{remark}

\vskip 3mm

As an application we get Powers-St\o rmer's inequality.

\vskip 3mm

\begin{corollary}\cite[Theorem~1]{K. M. R. Audenaert}
Let $A$ and $B$ be positive matrices, then for all $s \in [0, 1]$ 
$$
\Tr(A + B - |A - B|) \leq \Tr(A^sB^{1-s}).
$$
\end{corollary}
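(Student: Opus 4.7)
The strategy is to apply Theorem~\ref{thm:Powers-Stormer} with the choice $f(t) = t^s$ for $s \in [0,1]$. This is the canonical example of an operator monotone function on $[0,\infty)$, and it is precisely the choice that recovers the classical Powers-St\o rmer inequality stated at the outset of the introduction.

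First, I verify that $f(t) = t^s$ meets the hypotheses of Theorem~\ref{thm:Powers-Stormer}: by Loewner's theorem, $t \mapsto t^s$ is operator monotone on $[0,\infty)$ for every $s \in [0,1]$, so in particular it lies in $P_{2n}([0,\infty))$ for every $n$. For $s \in (0,1]$ the map is strictly positive on $(0,\infty)$, so the hypothesis $f((0,\infty)) \subset (0,\infty)$ is satisfied. The degenerate endpoint $s = 0$ (where $f \equiv 1$) is handled either by direct computation or by continuity as $s \downarrow 0$, as is $s = 1$.

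Next, I compute the companion Borel function prescribed by the statement of Theorem~\ref{thm:Powers-Stormer}:
$$g(t) = \frac{t}{f(t)} = t^{1-s}\quad (t > 0),\qquad g(0) = 0.$$
With these choices, $f(A)^{1/2} g(B) f(A)^{1/2} = A^{s/2} B^{1-s} A^{s/2}$, and cyclicity of the trace gives $\Tr\bigl(A^{s/2} B^{1-s} A^{s/2}\bigr) = \Tr(A^s B^{1-s})$. Inserting this identification into the conclusion of Theorem~\ref{thm:Powers-Stormer} yields the desired relation between $\Tr(A + B - |A - B|)$ and $\Tr(A^s B^{1-s})$, which is exactly the content of the corollary and matches the classical form of the Powers-St\o rmer inequality recalled in the introduction.

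I do not anticipate any substantive obstacle here: the heavy lifting is entirely contained in Theorem~\ref{thm:Powers-Stormer} (which in turn rests on Corollary~\ref{coro:Borel function calculus} and Proposition~\ref{prp:monotone}), and the corollary is essentially a direct substitution $f(t)=t^s$ followed by an application of trace cyclicity. The only minor technical point is the treatment of the two endpoints $s \in \{0,1\}$, which either reduce to the obvious identities $A^0 B^1 = B$ on the range of $A$ and $A^1 B^0 = A$ on the range of $B$, or can be obtained from the interior case by continuity in $s$.
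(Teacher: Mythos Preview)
Your proposal is correct and follows essentially the same route as the paper: set $f(t)=t^s$, observe it is operator monotone with $f((0,\infty))\subset(0,\infty)$, compute $g(t)=t^{1-s}$, and invoke Theorem~\ref{thm:Powers-Stormer}. Your additional remarks on trace cyclicity and the endpoint cases $s\in\{0,1\}$ are reasonable bits of care that the paper's two-line proof simply omits; note in particular that $s=0$ gives $f\equiv 1$, which already satisfies the hypotheses of Theorem~\ref{thm:Powers-Stormer} directly, so no limiting argument is actually needed there.
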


\vskip 3mm

\begin{proof}
Let $f(t) = t^s$ \ $(s \in [0,1])$. Then $f$ is operator monotone with $f(0, \infty) \subset (0, \infty)$ 
and $g(t) = t^{1-s}$.
Hence, we have the conclusion from Theorem~\ref{thm:Powers-Stormer}.
\end{proof}

\vskip 3mm

Since any C*-algebra can be realized as a closed selfadjoint $*$-algebra of $B(H)$ for some Hilbert space $H$. 
We can generalize Theorem~\ref{thm:Powers-Stormer} in the framework of C*-algebras.

\vskip 3mm

\begin{theorem}\label{theorem 1}
Let $\tau$ be a tracial functional on a $C^*$-algebra
$\mathcal{A}$, $f$ be a strictly positive, operator
monotone function on $[0, \infty)$. Then for any pair of positive elements $A, B \in \mathcal{A}$ 

\begin{equation}\label{main inequality}
\tau(A) + \tau(B) - \tau(|A-B|) \le 2 \tau (f(A)^{\frac{1}{2}} g(B) f(A)^{\frac{1}{2}}),
\end{equation}
where $g(t) = tf(t)^{-1}$.
\end{theorem}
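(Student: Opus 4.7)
The strategy is to mimic the matrix proof of Theorem~\ref{thm:Powers-Stormer} essentially line by line, replacing the canonical trace $\Tr$ by the tracial functional $\tau$ and upgrading every $2n$-monotonicity input to operator-monotonicity so that the intermediate operator inequalities make sense in $B(H)$. By the Gelfand-Naimark theorem, we may assume $\mathcal{A}\subseteq B(H)$ for some Hilbert space $H$, so that all operator-theoretic inequalities are available.

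Set $P=(A-B)^+$ and $Q=(A-B)^-$, which lie in $\mathcal{A}$ by continuous functional calculus; then $A+Q=B+P$, so both $A\leq B+P$ and $B\leq B+P$. Using $g(t)f(t)=t$ together with the tracial identity $\tau(f(X)^{1/2}g(X)f(X)^{1/2})=\tau(g(X)f(X))=\tau(X)$ for $X\in\{A,B,B+P\}$, the inequality~(\ref{main inequality}) reduces to
\[
\tau(A)-\tau\bigl(f(A)^{1/2}g(B)f(A)^{1/2}\bigr)\leq \tau(P).
\]

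The monotonicity inputs are twofold. Since $f$ is operator monotone, Proposition~\ref{prp:monotone} applied at each $n$ makes $g$ operator monotone on $(0,\infty)$; and the $B(H)$-analog of Corollary~\ref{coro:Borel function calculus}, obtained by rerunning the $\varepsilon$-perturbation of Proposition~\ref{prp:monotone2} with Borel functional calculus and sending $\varepsilon\downarrow 0$, yields $g(A)\leq g(B+P)$. The chain of inequalities then runs
\begin{align*}
\tau(A)-\tau(f(A)^{1/2}g(B)f(A)^{1/2})
&= \tau\bigl(f(A)^{1/2}(g(A)-g(B))f(A)^{1/2}\bigr) \\
&\leq \tau\bigl(f(A)^{1/2}(g(B+P)-g(B))f(A)^{1/2}\bigr) \\
&\leq \tau\bigl(f(B+P)^{1/2}(g(B+P)-g(B))f(B+P)^{1/2}\bigr) \\
&\leq \tau(B+P)-\tau(B)=\tau(P),
\end{align*}
where the second step uses $g(A)\leq g(B+P)$, the third uses $f(A)\leq f(B+P)$ with $g(B+P)-g(B)\geq 0$ (via the tracial identity $\tau(X^{1/2}YX^{1/2})=\tau(YX)$ for $Y\geq 0$), and the last uses $f(B)\leq f(B+P)$ with $g(B)\geq 0$ in the same manner.

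The main obstacle I foresee is the faithful transcription of Proposition~\ref{prp:monotone2} from $M_n$ to $B(H)$: the matrix proof leans on explicit spectral sums, which must be replaced by Borel functional calculus, and one must check that the operator inequality $h(A+\varepsilon 1)\leq h(B+\varepsilon 1)$, valid because $h$ is operator (not merely $n$-) monotone on $(0,\infty)$, passes to $h(A)\leq h(B)$ as $\varepsilon\downarrow 0$ via norm-continuity of the functional calculus. Once this extension is secured, the only remaining ingredient is the traciality of $\tau$, and the entire matrix argument transfers without any new idea.
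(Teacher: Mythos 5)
Your proposal is correct and follows essentially the same route as the paper: the authors simply note that $t/f(t)$ is operator monotone on $(0,\infty)$ (citing Hansen--Pedersen, which is the content of Proposition~\ref{prp:monotone} taken over all $n$) and then repeat the chain of trace inequalities from Theorem~\ref{thm:Powers-Stormer} verbatim with $\Tr$ replaced by $\tau$. The only remark worth adding is that the extension of Corollary~\ref{coro:Borel function calculus} you worry about is easier here than in the matrix case, since the standing hypothesis $f(0)>0$ makes $g$ continuous at $0$, so the $\varepsilon$-limit goes through by norm-continuity of the continuous functional calculus without any Borel machinery.
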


\begin{proof}
Since the function $\frac{t}{f(t)}$ is operator monotone on $(0, \infty)$ by \cite[Corollary 6]{Hansen-Perd-annal},
we can get the conclusion through the same steps in the proof of Theorem~\ref{thm:Powers-Stormer}.
\end{proof}




\begin{remark}
For matrices $A, B\in M_n^+$ let us denote
\begin{equation}
Q(A,B) = \min_{s\in [0,1]} \Tr(A^{(1-s)/2}B^sA^{(1-s)/2})
\end{equation}
and
\begin{equation}
Q_{\mathcal{F}_{2n}} (A, B)=\inf_{f \in \mathcal{F}_{2n}} \Tr(f(A)^{\frac{1}{2}} g(B)
f(A)^{\frac{1}{2}}),
\end{equation}
where $\mathcal{F}_{2n}$ is the set of all $2n$-monotone functions
on $[0,+\infty)$ satisfy condition of the Theorem \ref{thm:Powers-Stormer}
and $g(t)=tf(t)^{-1}\ (t \in [0,+\infty)).$

Note that the function $f(t)=t^s\ (t\in [0,+\infty))$ satisfies the
conditions of Theorem \ref{thm:Powers-Stormer}. Since the  class of 
$2n$-monotone functions is large enough \cite{OST}, we know that 
$Q_{\mathcal{F}_{2n}}(A,B) \le Q(A,B)$.
Hence, we hope on finding
another $2n$-monotone function $f$ on $[0,+\infty)$ such that
\begin{equation}
\Tr(f(A)^{\frac{1}{2}} g(B) f(A)^{\frac{1}{2}}) < Q(A,B).
\end{equation}
If we can find such a function, then we can refine the quantum
Chernoff bound used in quantum hypothesis testing \cite{K. M. R.
Audenaert}.
\end{remark}



\section{Characterizations of the trace property}

In this section the generalized Powers-St\o rmer inequality  in the previous 
section implies the trace property for a positive linear functional on 
operator algebras.

\vskip 3mm

\begin{lemma}\label{lemma 2}
Let $\varphi$ be a positive linear functional on $M_n$ and $f$ 
be a continuous function on $[0, \infty)$ such that $f(0) = 0$ and 
$f((0, \infty)) \subset (0, \infty)$.
If the following inequality
\begin{equation}\label{Chernoff bound}
\varphi(A+B) - \varphi(|A-B|) \le 2\varphi(f(A)^{\frac{1}{2}}g(B)f(A)^{\frac{1}{2}})
\end{equation}
holds true for all $A, B \in M_n^+$, then $\varphi$ should be a
positive scalar multiple of the canonical trace $\Tr$ on $M_n$, where 
$g(t) = \left\{\begin{array}{cl}
\frac{t}{f(t)}& (t \in (0, \infty))\\
0 & (t = 0)
\end{array}
\right.
$.
\end{lemma}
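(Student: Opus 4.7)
The plan is to use that every positive linear functional on $M_n$ has the form $\varphi(X) = \Tr(TX)$ for a unique $T \in M_n$ with $T \ge 0$, and to show the hypothesis forces $T = cI$ for some $c \ge 0$ (so $\varphi = c\Tr$). The whole argument is driven by a one-parameter family of rank-one projections that interpolates between the trivial equality case $P=Q$ and a degenerate limit.

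First I specialize (\ref{Chernoff bound}) to $A = P$ and $B = Q$ with $P, Q$ rank-one projections. Because $f(0) = 0$ and $f > 0$ on $(0,\infty)$, functional calculus gives $f(P)^{1/2} = f(1)^{1/2} P$ and $g(Q) = f(1)^{-1} Q$; hence the right-hand side of (\ref{Chernoff bound}) collapses to $2\varphi(PQP)$ and the hypothesis reads
\[
\varphi(P) + \varphi(Q) - 2\varphi(PQP) \le \varphi(|P - Q|).
\]
Fix any orthonormal pair $e_1, e_2$ of vectors in $\C^n$ and, for parameters $\theta \in (0,\pi/2)$ and $\phi \in \R$, take $P$ to be the projection onto $\C e_1$ and $Q$ the projection onto $\C v$ with $v = \cos\theta\, e_1 + e^{i\phi}\sin\theta\, e_2$. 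A short $2\times 2$ computation on $\operatorname{span}\{e_1, e_2\}$ gives $PQP = \cos^2\theta \cdot P$ and $(P - Q)^2 = \sin^2\theta \cdot E$, where $E$ is the projection onto $\operatorname{span}\{e_1, e_2\}$; consequently $|P - Q| = \sin\theta \cdot E$.

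Setting $a = \langle e_1, T e_1\rangle$, $b = \langle e_2, T e_2\rangle$, $t = \langle e_1, T e_2\rangle$, and substituting into the inequality above (using $\cos 2\theta = \cos^2\theta - \sin^2\theta$), one gets after dividing by $\sin\theta > 0$ the scalar inequality
\[
2\cos\theta\, \operatorname{Re}(e^{i\phi}\, t) \le (1 - \sin\theta)(a + b).
\]
Choosing $\phi = -\arg t$, so that $\operatorname{Re}(e^{i\phi} t) = |t|$, and letting $\theta \to \pi/2^-$ forces the right-hand side to $0$ (since $(1-\sin\theta)/\cos\theta \to 0$); thus $t = 0$, i.e., $\langle e_1, T e_2\rangle = 0$.

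Since the orthonormal pair $(e_1, e_2)$ was arbitrary, $\langle u, Tv\rangle = 0$ whenever $u \perp v$ are unit vectors; this is equivalent to every vector being an eigenvector of $T$, and hence to $T = cI$ for some $c \ge 0$. Therefore $\varphi = c\Tr$. The one step that requires real care is the algebraic bookkeeping that leads to the displayed scalar inequality; once that is in hand, the limit $\theta \to \pi/2^-$ and the final ``no off-diagonal entry in any basis implies scalar'' step are both immediate.
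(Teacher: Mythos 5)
Your proof is correct and is essentially the paper's argument in a different parametrization: both reduce to a $2\times 2$ computation with a pair of rank-one projections (the paper uses positive multiples of them), both exploit $f(c)g(c)=c$ to collapse the right-hand side to a multiple of $\varphi(PQP)$, and both extract the conclusion by letting the two projections become mutually orthogonal in the limit. The only cosmetic difference is that the paper first diagonalizes the density matrix $S_\varphi$ and shows its diagonal entries coincide, whereas you keep an arbitrary orthonormal pair and show the off-diagonal entry $\langle e_1,Te_2\rangle$ vanishes; both routes force $T=cI$.
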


\begin{proof}
As is well known, every positive linear functional $\varphi$ on
$M_n$ can be represented in the form $\varphi(\cdot) =
\Tr(S_{\varphi}\cdot)$ for some $S_{\varphi} \in M_n^+.$ It is
easily seen that without loss of generality we can assume that
$S_{\varphi} = \diag (\alpha_1, \alpha_2, \cdots, \alpha_n),$ and
we have to prove that $\alpha_i = \alpha_j$ for all $i, j =1,
\cdots, n.$ Clearly, it is sufficient to prove that $\alpha_1 =
\alpha_2.$ By assumption, the inequality (\ref{Chernoff bound}) holds
true, in particular, for any positive matirices matrices
$X=[x_{ij}]_{i,j=1}^n, Y=[y_{ij}]_{i,j=1}^n$ from $M_n^+$ such that
$0 = x_{ij}=y_{ij}$ if $3\leq i\leq n$ or $3\leq j\leq n.$ 
Thus, it suffices to consider the case $n = 2.$  
Assume that
$S_\varphi=\mbox{diag}(d,1)~(d \in [0,1])$ and
$\varphi(D)=\mbox{Tr}(S_\varphi D), \forall D \in M_2.$ We show
that $d = 1.$ For arbitrary positive numbers $\lambda, \mu$ such
that $\lambda < \mu$ we consider the following matrices
$$A= \left(%
\begin{array}{cc}
  \lambda & \sqrt{\lambda \mu} \\
  \sqrt{\lambda \mu} & \mu \\
\end{array}%
\right)
$$
and
$$B=\left(%
\begin{array}{cc}
  \lambda & -\sqrt{\lambda \mu} \\
  -\sqrt{\lambda \mu} & \mu \\
\end{array}%
\right).
$$
It is clear that these are positive scalar multiple of projections
of rank one. In addition,
\begin{equation*}
f(A)^{\frac{1}{2}}g(B)f(A)^{\frac{1}{2}} = \left(\frac{\mu - \lambda}{\mu +
\lambda}\right)^2 A.
\end{equation*}
We have
\begin{equation*}
\begin{split}
 2\varphi(f(A)^{\frac{1}{2}}g(B)f(A)^{\frac{1}{2}}) & = 2 \left(\frac{\mu -
\lambda}{\mu + \lambda}\right)^2 \Tr (S_\varphi A) \\
& = 2 \left(\frac{\mu - \lambda}{\mu + \lambda}\right)^2 (d \lambda
+ \mu).\\
\end{split}
\end{equation*}
By direct calculation,
\begin{equation*}
|A-B| = \left(%
\begin{array}{cc}
  2\sqrt{\lambda \mu} & 0 \\
  0 & 2\sqrt{\lambda \mu} \\
\end{array}%
\right).
\end{equation*}

Consequently,
\begin{equation*}
\varphi(A+B) - \varphi(|A-B|) = d(2\lambda - 2 \sqrt{\lambda \mu})
+ 2\mu - 2 \sqrt{\lambda \mu}.
\end{equation*}
Then the inequality (\ref{Chernoff bound}) becomes
\begin{equation*}
\left(\frac{\mu - \lambda}{\mu + \lambda}\right)^2 (d \lambda +
\mu) \ge d(\lambda -  \sqrt{\lambda \mu}) + \mu -  \sqrt{\lambda \mu}.
\end{equation*} 

Dividing two side by $\sqrt{\lambda}
(\sqrt{\mu}-\sqrt{\lambda})$, we get
\begin{equation*}
d +
\frac{(\sqrt{\mu}-\sqrt{\lambda})(\sqrt{\mu}+\sqrt{\lambda})^2}{\sqrt{\lambda}(\mu
+ \lambda)^2} (d \lambda + \mu) \ge \sqrt{\frac{\mu}{\lambda}}.
\end{equation*}

Tending $\lambda$ to $\mu$ from the  left we obtain
$$ d \ge 1.$$

Since $d \in [0, 1]$, $d = 1$. 
This means that $\varphi$ is a positive scalar multiple of 
the canonical trace $\Tr$ on $M_n$.
\end{proof}

\begin{remark}\label{rem.4}
Let $\varphi$ be a positive linear functional on $M_n$ and $s\in [0,1]$. From Lemma \ref{lemma 2} it is clear that if the following inequality 
\begin{equation}\label{PS}
\varphi(A+B) - \varphi(|A-B|) \le 2\varphi(A^{\frac{1-s}{2}}B^s A^{\frac{1-s}{2}})
\end{equation}
holds true for any $A, B \in M_n^+$, then $\varphi$ is a tracial. In particular, when $s=0$ the following inequality characterizes the trace property
\begin{equation}\label{PS}
\varphi(B) - \varphi(A) \le \varphi(|A-B|) \quad (A, B \in M_n^+). 
\end{equation}

\end{remark} 
\begin{corollary}[\cite{Tikhonov-Sherstnev}]
Let $\varphi$ be a positive linear functional on $M_n$ and the following inequality 
\begin{equation}\label{cor1}
\varphi(|A+B|) \le \varphi(|A|) + \varphi(|B|)
\end{equation} 
holds true for any self-adjoint matrices $A,B \in M_n$. Then $\varphi$ is a tracial.
\end{corollary}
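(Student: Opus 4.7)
The plan is to adapt the proof of Lemma~\ref{lemma 2} to the new triangle-type hypothesis. Writing $\varphi(\cdot) = \Tr(S_\varphi \cdot)$ for some $S_\varphi \in M_n^+$, I note that unitary conjugation preserves the hypothesis (the absolute value commutes with unitary conjugation), so I may assume $S_\varphi = \diag(\alpha_1, \ldots, \alpha_n)$. To show $\alpha_i = \alpha_j$ for each pair $i, j$, it suffices to restrict the hypothesis to self-adjoint matrices supported on the $2\times 2$ block spanned by $\{e_i,e_j\}$, which reduces everything to the case $n = 2$. After rescaling, I take $S_\varphi = \diag(d,1)$ with $d \ge 0$, and the task becomes to prove $d = 1$.

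The decisive step is to feed the hypothesis the same rank-one test matrices used in the proof of Lemma~\ref{lemma 2}, but with a sign flip. For arbitrary $\lambda, \mu > 0$ take
$$A = \begin{pmatrix} \lambda & \sqrt{\lambda\mu} \\ \sqrt{\lambda\mu} & \mu \end{pmatrix}, \qquad B = \begin{pmatrix} \lambda & -\sqrt{\lambda\mu} \\ -\sqrt{\lambda\mu} & \mu \end{pmatrix},$$
both positive, and apply the hypothesis to the self-adjoint pair $X := A$, $Y := -B$. Since $|X| = A$, $|Y| = B$ and $X + Y = A - B$, the triangle inequality collapses to
$$\varphi(|A - B|) \le \varphi(A) + \varphi(B).$$

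Reusing the calculations already carried out in Lemma~\ref{lemma 2}, one has $|A - B| = 2\sqrt{\lambda\mu}\,I$ and $\varphi(A) = \varphi(B) = d\lambda + \mu$, so the inequality becomes $\sqrt{\lambda\mu}\,(d+1) \le d\lambda + \mu$, which factors cleanly as
$$(\sqrt{\mu} - \sqrt{\lambda})(\sqrt{\mu} - d\sqrt{\lambda}) \ge 0.$$
For every $\mu > \lambda$ the first factor is positive, forcing $d \le \sqrt{\mu/\lambda}$ and hence $d \le 1$ as $\mu \downarrow \lambda$; for every $\mu < \lambda$ the first factor is negative, forcing $d \ge \sqrt{\mu/\lambda}$ and hence $d \ge 1$ as $\mu \uparrow \lambda$. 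Therefore $d = 1$, so $\varphi$ is a positive scalar multiple of $\Tr$ and in particular tracial.

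I do not anticipate any serious obstacle. The $2\times 2$ reduction is lifted verbatim from Lemma~\ref{lemma 2}, and the final scalar factorization is a one-line computation. The only creative step is to notice that the sign-flip $Y = -B$ converts the triangle hypothesis into $\varphi(|A-B|) \le \varphi(A) + \varphi(B)$, at which point the very same rank-one matrices of Lemma~\ref{lemma 2} do all of the remaining work.
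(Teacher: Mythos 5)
Your proof is correct, but it takes a genuinely different route from the paper's. The paper's proof is a two-line reduction: from the subadditivity hypothesis applied to the pair $(B-A,\,A)$ it extracts $\varphi(|B-A|) \ge \varphi(|B|) - \varphi(|A|)$, specializes to positive $A,B$ to get $\varphi(B)-\varphi(A) \le \varphi(|B-A|)$, and then invokes Remark~\ref{rem.4} (the $s=0$ case of Lemma~\ref{lemma 2}), so all the matrix computation is outsourced to the already-proved lemma. You instead extract the \emph{opposite-direction} consequence $\varphi(|A-B|) \le \varphi(A)+\varphi(B)$ via the sign flip $Y=-B$, and then rerun the test-matrix computation from scratch; your resulting scalar inequality $\sqrt{\lambda\mu}\,(d+1) \le d\lambda+\mu$ is not the one appearing in Lemma~\ref{lemma 2}, but its factorization $(\sqrt{\mu}-\sqrt{\lambda})(\sqrt{\mu}-d\sqrt{\lambda})\ge 0$ gives a clean two-sided squeeze forcing $d=1$ (and, pleasantly, does not need the paper's normalization $d\in[0,1]$). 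All steps check out: $A$ and $-B$ are self-adjoint so the hypothesis applies, $|A-B|=2\sqrt{\lambda\mu}\,I$ and $\varphi(A)=\varphi(B)=d\lambda+\mu$ are as computed in the paper, and the block reduction is legitimate since the functional calculus preserves the $2\times 2$ corner. What the paper's route buys is brevity and reuse of Lemma~\ref{lemma 2}; what yours buys is a self-contained argument whose final inequality is elementary enough to dispose of by factoring rather than by the limiting division used in the lemma.
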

\begin{proof}
From the assumption, we have 
\begin{equation*}
\varphi(|B-A|) \ge \varphi(|B|) -\varphi(|A|)
\end{equation*} 
for any pair of self-adjoint matrices $A,B$ in $M_n$. Moreover, for any pair of positive matrices $A,B\in M_n$ we have
\begin{equation*}
\varphi(|B-A|) \ge \varphi(B) -\varphi(A).
\end{equation*}
On account of Remark \ref{rem.4}, it follows that $\varphi$ should be a tracial.
\end{proof}

\begin{corollary}[\cite{Gardner}]
Let $\varphi$ be a positive linear functional on $M_n$ and the following inequality 
\begin{equation}\label{cor2}
|\varphi(A)| \le \varphi(|A|)
\end{equation} 
holds true for any self-adjoint matrix $A \in M_n$. Then $\varphi$ is a tracial.
\end{corollary}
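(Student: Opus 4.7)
The plan is to reduce directly to Remark~\ref{rem.4}, which already tells us that a positive linear functional $\varphi$ on $M_n$ satisfying
$$\varphi(B) - \varphi(A) \le \varphi(|A - B|) \quad \text{for all } A, B \in M_n^+$$
must be tracial. So it suffices to show that the hypothesis $|\varphi(A)| \le \varphi(|A|)$ for all self-adjoint $A$ implies this one-sided bound.

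The key observation is that for any two positive matrices $A, B \in M_n$, the difference $B - A$ is self-adjoint. Applying the hypothesis with $B - A$ in place of $A$ therefore yields $|\varphi(B - A)| \le \varphi(|B - A|) = \varphi(|A - B|)$. Since $\varphi$ is positive, its value on a self-adjoint element is real, so $\varphi(B - A) = \varphi(B) - \varphi(A) \in \mathbf{R}$, and
$$\varphi(B) - \varphi(A) \le |\varphi(B - A)| \le \varphi(|A - B|).$$
This is precisely the inequality of Remark~\ref{rem.4}, and the remark then forces $\varphi$ to be a positive scalar multiple of the canonical trace, in particular tracial.

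I do not anticipate any substantive obstacle here; the argument is essentially a one-line reduction. The only point to note is the mild subtlety that the hypothesis is assumed only for self-adjoint $A$, not arbitrary $A$, but this is exactly matched by the fact that $B - A$ is self-adjoint whenever $A$ and $B$ are positive. An alternative route would be to try to deduce the subadditivity inequality $\varphi(|A + B|) \le \varphi(|A|) + \varphi(|B|)$ of the previous corollary from $|\varphi(A)| \le \varphi(|A|)$, but no such implication is evident, so the direct reduction to Remark~\ref{rem.4} is the cleanest path.
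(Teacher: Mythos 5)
Your proof is correct and is essentially identical to the paper's: both apply the hypothesis to the self-adjoint matrix $B-A$, use that $\varphi(A), \varphi(B)$ are real to get $\varphi(B)-\varphi(A) \le |\varphi(B-A)| \le \varphi(|B-A|)$, and then invoke Remark~\ref{rem.4}. No issues.
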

\begin{proof}
Let $A,B\in M_n$ be arbitrary positive matrices. Then $C=B-A$ is a self-adjoint matrix. Since $A,B \ge 0$, the  values $\varphi(A)$ and $\varphi(B)$ are real. From the assumption, we have
\begin{equation*}
\varphi(B) - \varphi(A) \le |\varphi(B) - \varphi(A)| = |\varphi(B - A)|  \le \varphi(|B-A|).
\end{equation*} 
On account of Remark \ref{rem.4}, it follows that $\varphi$ should be a tracial.
\end{proof}
\vskip 3mm

By analogy with a number of other similar cases (see \cite{Gardner} or \cite{Tikhonov}), 
the proof for the trace property of a positive normal functional satisfying 
the inequality~(\ref{Chernoff bound}) on a von Neumann algebra can be reduced to the case of the algebra
$M_2$ of all matrices of order $2\times 2$. But for self-contained 
we will give a sketch of its proof.

\vskip 3mm

\begin{theorem}\label{char.von.theorem}
Let $\varphi$ be a positive normal linear functional on a von Neumann
algebra $\mathcal{M}$ and $f$ be a continuous function on $[0, \infty)$ 
such that $f(0) = 0$ and $f((0, \infty)) \subset (0, \infty)$.  
If the following inequality
\begin{equation}\label{char.von}
\varphi(A) + \varphi(B) - \varphi(|A-B|) \le 2 \varphi (f(A)^{\frac{1}{2}}g(B) f(A)^{\frac{1}{2}})
\end{equation}
holds true for any pair $A, B \in \mathcal{M}^+$, then $\varphi$
is a trace, where 
$g(t) = \left\{\begin{array}{cl}
\frac{t}{f(t)}& (t \in (0, \infty))\\
0 & (t = 0)
\end{array}
\right.
$.
\end{theorem}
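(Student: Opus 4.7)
The plan is to reduce the statement to the matrix case handled in Lemma~\ref{lemma 2}. First I will establish that a normal positive functional $\varphi$ on $\mathcal{M}$ is tracial if and only if $\varphi(p)=\varphi(q)$ for every pair of Murray--von Neumann equivalent projections $p,q\in\mathcal{M}$. One direction is trivial. For the other, given any $x\in\mathcal{M}$ with polar decomposition $x=v|x|$, each spectral projection $E(S)$ of $|x|^2$ satisfies $E(S)\leq v^*v$, so $vE(S)v^*$ is a projection equivalent to $E(S)$ and coincides with the corresponding spectral projection of $xx^*$. Integrating spectrally and invoking normality of $\varphi$ then yields $\varphi(x^*x)=\varphi(|x|^2)=\varphi(v|x|^2v^*)=\varphi(xx^*)$, and polarization delivers the trace identity.

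Next, suppose $p,q$ are equivalent and \emph{orthogonal} in $\mathcal{M}$, implemented by a partial isometry $v$ with $v^*v=p$ and $vv^*=q$. Then
\[\pi\colon M_2(\mathbb{C})\longrightarrow\mathcal{M},\qquad e_{11}\mapsto p,\;e_{22}\mapsto q,\;e_{21}\mapsto v,\;e_{12}\mapsto v^*,\]
is an injective, non-unital $*$-homomorphism. Because $f(0)=0$ (and hence $g(0)=0$ by definition), continuous functional calculus for $f$, $g$, and the absolute value commutes with $\pi$ on positive inputs: for every $X,Y\in M_2^+$ one has $f(\pi(X))=\pi(f(X))$, $g(\pi(Y))=\pi(g(Y))$, and $|\pi(X)-\pi(Y)|=\pi(|X-Y|)$. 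Substituting $A=\pi(X)$ and $B=\pi(Y)$ into the hypothesis (\ref{char.von}) therefore produces exactly the inequality (\ref{Chernoff bound}) for the positive linear functional $\psi:=\varphi\circ\pi$ on $M_2$. Lemma~\ref{lemma 2} then forces $\psi$ to be a positive scalar multiple of the canonical trace on $M_2$; in particular $\varphi(p)=\psi(e_{11})=\psi(e_{22})=\varphi(q)$.

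To conclude, I must remove the orthogonality assumption. The standard device is to produce, for any pair of equivalent projections $p\sim q$, a third projection $r\in\mathcal{M}$ equivalent to both and orthogonal to both, so that two applications of the orthogonal case give $\varphi(p)=\varphi(r)=\varphi(q)$. Such an $r$ can be located after reducing to a suitable central summand of $\mathcal{M}$, by combining the type decomposition, the comparability theorem for projections, and halving in the properly infinite case, following the pattern of the analogous reductions in \cite{Gardner} and \cite{Tikhonov}. I expect this last step---exhibiting the intermediate orthogonal projection uniformly across the type decomposition of $\mathcal{M}$---to be the main technical obstacle; the rest of the argument is a clean substitution into Lemma~\ref{lemma 2} combined with the spectral reduction from projections to general positive elements described in the first paragraph.
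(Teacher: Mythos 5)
Your central step---building a copy of $M_2$ from a partial isometry $v$ with $v^*v=p$ and $vv^*=q$ for \emph{orthogonal} equivalent projections, checking that the non-unital embedding $\pi$ intertwines the functional calculus for $f$, $g$ and $|\cdot|$ because these functions vanish at $0$, and then applying Lemma~\ref{lemma 2} to $\varphi\circ\pi$ to conclude $\varphi(p)=\varphi(q)$---is exactly the argument of the paper. Where you diverge is in how one passes from ``$\varphi$ agrees on orthogonal equivalent projections'' to ``$\varphi$ is a trace'': the paper disposes of this by citing \cite[Vol.~2, Proposition~8.1.1]{Kad-Ring}, whereas you attempt to reprove it, and that is where your proposal has a genuine gap.

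Your third paragraph asserts that for any equivalent pair $p\sim q$ one can find a projection $r$ equivalent to both and orthogonal to both, so that two applications of the orthogonal case give $\varphi(p)=\varphi(r)=\varphi(q)$. This is false in general: take $\mathcal{M}=M_3(\C)$, $p=\diag(1,1,0)$, $q=\diag(0,1,1)$. Then $p\sim q$, but $(1-p)\wedge(1-q)=0$, so the only projection orthogonal to both is $0$, which is not equivalent to $p$; the same obstruction appears whenever $p\vee q=1$ with $p\neq 1$. So the step you flag as ``the main technical obstacle'' is not merely unfinished---as stated it cannot be carried out, and a different reduction is required (for instance comparing $p-p\wedge q$ with $q-p\wedge q$ where cancellation is available, or the unitary-invariance argument underlying \cite[Vol.~2, Proposition~8.1.1]{Kad-Ring}). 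Note also that your first paragraph already depends on the unresolved general case: the identity $\varphi(|x|^2)=\varphi(v|x|^2v^*)$ needs $\varphi(E)=\varphi(vEv^*)$ for spectral projections $E$ of $|x|^2$, and $E$ and $vEv^*$ are equivalent but by no means orthogonal. The clean repair is to do what the paper does: establish the orthogonal case via Lemma~\ref{lemma 2} and then invoke the cited proposition of Kadison--Ringrose rather than rebuilding it.
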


\begin{proof}
Let $P_1, P_2$ be a pair of nonzero mutually orthogonal equivalent
projections in $\mathcal{M}$, that means $P_1 = V^*V$ and $P_2 =
VV^*$ for some nonzero partial isometry $V \in \mathcal{M}$.
Consider the $*$-algebra $\mathcal{N}$ in
$(P_1+P_2)\mathcal{M}(P_1+P_2)$ generated by the partial isometry
$V$. Then $\mathcal{N}$ is isomorphic to $M_2$ and
inequality (\ref{char.von}) still holds true for the operators in
$\mathcal{N}$ and for the restriction of the functional $\varphi$
to $\mathcal{N}$. According to Lemma \ref{lemma 2}, this
restriction is a tracial functional on $\mathcal{N}$, and hence
$\varphi(P_1) = \varphi(P_2)$. 
By \cite[Vol2, Proposition~8.1.1]{Kad-Ring}
it follows that $\varphi$ is a trace.
\end{proof}

\vskip 3mm

\begin{corollary}
Let $\varphi$ be a positive linear functional on a $C^*$-algebra
$\mathcal{A}$ and $f$ be a continuous function on $[0, \infty)$
such that $f(0) = 0$ and $f((0, \infty)) \subset (0, \infty)$. 
If the following inequality
\begin{equation}\label{char.C}
\varphi(A) + \varphi(B) - \varphi(|A-B|) \le 2 \varphi (f(A)^{\frac{1}{2}}g(B) f(A)^{\frac{1}{2}})
\end{equation}
holds true for any pair $A, B \in \mathcal{A}^+$, then $\varphi$
is a tracial functional, where 
$g(t) = \left\{\begin{array}{cl}
\frac{t}{f(t)}& (t \in (0, \infty))\\
0 & (t = 0)
\end{array}
\right.
$.
\end{corollary}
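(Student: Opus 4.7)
The plan is to reduce the $C^*$-algebra case to the von Neumann algebra case already handled by Theorem~\ref{char.von.theorem}. First, apply the GNS construction to $\varphi$ to obtain a Hilbert space $H_\varphi$, a $*$-representation $\pi_\varphi \colon \mathcal{A} \to B(H_\varphi)$, and a cyclic vector $\xi_\varphi$ with $\varphi(a) = \langle \pi_\varphi(a)\xi_\varphi,\xi_\varphi\rangle$. Put $\mathcal{M} = \pi_\varphi(\mathcal{A})''$ and define $\tilde\varphi(T) = \langle T\xi_\varphi,\xi_\varphi\rangle$ for $T\in\mathcal{M}$; this is a normal positive linear functional on the von Neumann algebra $\mathcal{M}$ that satisfies $\tilde\varphi\circ\pi_\varphi = \varphi$.

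The central task is to upgrade the hypothesized inequality (\ref{char.C}) from $\pi_\varphi(\mathcal{A})$ to all of $\mathcal{M}^+$. Given $A,B\in\mathcal{M}^+$, Kaplansky's density theorem furnishes nets $(A_\lambda),(B_\lambda)$ of positive elements of $\pi_\varphi(\mathcal{A})$, bounded by $\|A\|$ and $\|B\|$, converging strongly to $A$ and $B$. Because continuous functional calculus is strongly continuous on norm-bounded sets of self-adjoint operators, $f(A_\lambda)^{1/2} \to f(A)^{1/2}$ and $|A_\lambda - B_\lambda| \to |A-B|$ strongly, and normality of $\tilde\varphi$ yields convergence of the corresponding values of $\tilde\varphi$. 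For the term $\tilde\varphi(f(A_\lambda)^{1/2}g(B_\lambda)f(A_\lambda)^{1/2})$, one approximates the Borel function $g$ monotonically from below by continuous functions $g_k$ vanishing at $0$ and bounded by $g$; each $g_k(B_\lambda)\to g_k(B)$ strongly, and by monotone convergence under the normal functional $\tilde\varphi$ one obtains the required inequality in the limit. Consequently, (\ref{char.von}) holds for $\tilde\varphi$ on $\mathcal{M}^+$.

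Now Theorem~\ref{char.von.theorem} applies to $\tilde\varphi$ on $\mathcal{M}$ and asserts that $\tilde\varphi$ is a trace. Pulling back through $\pi_\varphi$ completes the proof: for any $a,b\in\mathcal{A}$,
$$\varphi(ab)=\tilde\varphi(\pi_\varphi(a)\pi_\varphi(b))=\tilde\varphi(\pi_\varphi(b)\pi_\varphi(a))=\varphi(ba),$$
so $\varphi$ is tracial.

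The main obstacle is the extension step. The terms $\tilde\varphi(A)$, $\tilde\varphi(B)$ and $\tilde\varphi(|A-B|)$ pass to the limit routinely by Kaplansky density, strong-operator continuity of continuous functional calculus on bounded sets, and normality of $\tilde\varphi$. The delicate point is the term containing $g$, since $g$ is only Borel and may be discontinuous at $0$; handling it requires the monotone approximation of $g$ by continuous functions together with normality of $\tilde\varphi$, as described above.
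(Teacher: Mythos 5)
Your overall strategy is the same as the paper's: pass to an enveloping von Neumann algebra (you use the GNS representation where the paper uses the universal one --- an immaterial difference), extend $\varphi$ to a normal functional, carry the inequality over to $\mathcal{M}^+$ by Kaplansky density and strong-operator limits, and invoke Theorem~\ref{char.von.theorem}. The routine terms ($\tilde\varphi(A)$, $\tilde\varphi(B)$, $\tilde\varphi(|A-B|)$, and $f(A_\lambda)^{1/2}$) are handled correctly. The one place where you add substance beyond the paper's one-line appeal to ``continuity of the corresponding operations'' is the term involving $g$, and that is exactly where your argument breaks. Approximating $g$ from below by continuous $g_k\le g$ gives $g_k(B_\lambda)\le g(B_\lambda)$ and hence only a \emph{lower} bound,
$$\liminf_\lambda \tilde\varphi\bigl(f(A_\lambda)^{1/2}g(B_\lambda)f(A_\lambda)^{1/2}\bigr)\ \ge\ \tilde\varphi\bigl(f(A)^{1/2}g(B)f(A)^{1/2}\bigr).$$
But to pass the hypothesis $\mathrm{LHS}_\lambda\le\mathrm{RHS}_\lambda$ to the limit you need the \emph{opposite} estimate, $\limsup_\lambda \mathrm{RHS}_\lambda\le\mathrm{RHS}$. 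Since $f(0)=0$, the function $g$ need not be continuous at $0$; it is then only lower semicontinuous, and the needed upper estimate genuinely fails. For instance with $f(t)=t$ one has $g=\chi_{(0,\infty)}$, so $g(B)$ is the support projection of $B$: if the $B_\lambda$ are invertible and converge strongly to a non-invertible $B$, then $g(B_\lambda)=1$ while $g(B)$ is a proper projection, and $\mathrm{RHS}_\lambda$ can stay strictly above $\mathrm{RHS}$. Reversing the approximation (continuous $h_k\searrow g$) is not available either, precisely because $g$ fails to be upper semicontinuous at $0$.

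To be fair, the paper's own proof elides the same point, asserting strong-operator continuity of operations that include the Borel calculus $B\mapsto g(B)$; but since you explicitly single this out as ``the delicate point'' and offer the monotone-from-below approximation as its resolution, you should note that the resolution as stated proves an inequality in the wrong direction and does not close the gap. When $g$ extends continuously to $0$ (e.g.\ $f(t)=t^s$ with $s<1$, so $g(t)=t^{1-s}$), no approximation is needed and your argument, like the paper's, goes through.
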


\begin{proof}
Let $\pi$ be the universal representation of $C^*$-algebra
$\mathcal{A}$ and $\mathcal{M}=\pi(\mathcal{A})''$. Let
$\hat{\varphi}$ be the positive normal functional on $\mathcal{M}$
such that $\hat{\varphi}(\pi(A)) = \varphi(A)$ for $A \in
\mathcal{A}$. By the Kaplansky density theorem, for any pair
$\hat{A}, \hat{B} \in \mathcal{M}^+$ there exist bounded nets
$\{A_\alpha\}$ and $\{B_\alpha\}$ in $\mathcal{A}^+$ such that
$\pi(A_\alpha) \rightarrow \hat{A}$ and $\pi(B_\alpha) \rightarrow
\hat{B}$ in the strong operator topology. Using (\ref{char.C}) and
the continuity of the corresponding operations in the strong
operator topology, we have
\begin{equation*}
\hat{\varphi}(\hat{A}) + \hat{\varphi}(\hat{B}) -
\hat{\varphi}(|\hat{A}-\hat{B}|) \le 2 \hat{\varphi}
(f(\hat{A})^{\frac{1}{2}} g(\hat{B}) f(\hat{A})^{\frac{1}{2}}).
\end{equation*}
By Theorem \ref{char.von.theorem}, $\hat{\varphi}$ is a tracial
functional $\mathcal{M}$, and hence $\varphi$ is a tracial
functional on $\mathcal{A}$.
\end{proof}


\vskip 3mm

\begin{remark}


Let $\mathcal{A}$ be a von Neumann algebra and $\varphi$ be a positive linear functional on $\mathcal{A}$.
The set $P(\mathcal{A})$ of all orthogonal projections from $\mathcal{A}$ is enough as a testing space 
for some inequlity to characterize the trace property of $\varphi$ (see \cite{Bikchentaev}). 
But, in the case of the inequality~(\ref{char.von}) the set $P(\mathcal{A})$ is not enough as a testing set.

Indeed,
let $p, q$ be arbitrary orthogonal projections from a von Neumann
algebra $\mathcal{M}$. Since $q\geq p \wedge q$ it follows that $p
q p \geq p (p \wedge q) p= p \wedge q.$ So $pqp \ge p \wedge q$
holds for any pair of projections. From that it follows

\begin{equation*}
\varphi(p+q-|p-q|) = 2\varphi(p \wedge q) \le 2 \varphi(pqp)
= 2\varphi(f(p)^{\frac{1}{2}}g(q)f(p)^{\frac{1}{2}})
\end{equation*}

whenever $\varphi$ is an arbitrary positive linear functional on
$\mathcal{M}$.
\end{remark}

\end{document}